\numberwithin{equation}{section}
\theoremstyle{plain}
  \newtheorem{theorem}{Theorem}
  \newtheorem{lemma}[theorem]{Lemma}
  \newtheorem{corollary}[theorem]{Corollary}
\theoremstyle{definition}
\theoremstyle{remark}
\newcommand{\comma}{\textrm{,}}
\newcommand{\period}{\textrm{.}}
\newcommand{\bra}[1]{\left( #1 \right)}
\newcommand{\sqa}[1]{\left[ #1 \right]}
\newcommand{\cur}[1]{\left\{ #1 \right\}}
\newcommand{\ang}[1]{\left< #1 \right>}
\newcommand{\abs}[1]{\left| #1 \right|}
\newcommand{\nor}[1]{\left\| #1 \right\|}
\newcommand{\rnum}{\mathbb{R}}
\newcommand{\nnum}{\mathbb{N}}
\newcommand{\veps}{\varepsilon}
\newcommand{\cF}{\mathcal{F}}
\newcommand{\eE}{\mathbb{E}}
\begin{document}

\title{A short proof of Stein's universal multiplier theorem}
\author{Dario Trevisan\
\thanks{Scuola Normale Superiore, Pisa, \textsf{dario.trevisan@sns.it}}
}
%
\maketitle

\abstract{We give a short proof of Stein's universal multiplier theorem, purely by probabilistic methods, thus avoiding any use of harmonic analysis techniques (complex interpolation or transference methods).}

\section{Introduction}

The celebrated Stein's universal multiplier theorem \cite[Corollary~IV.6.3]{stein} provides strong $(L^p, L^p)$-bounds for a general family of operators related to a Markovian semigroup $(T^t)_{t\ge0}$, virtually without any assumption on the underlying measure space $(X,m)$. More recent proofs of this classical result are based on analytic methods (see \cite{MR0500219, MR639463} and the monograph \cite{MR0481928}), which also shows that the Markovianity assumption on the semigroup can be removed, keeping only the $L^p$-contractivity assumption. On the other hand, Stein's original proof relies on deep connections with martingale theory; not much later, P.A.\ Meyer began to investigate the problem purely by stochastic methods (see e.g.\ \cite{MR0501379} and subsequent articles, and also \cite{MR889471} for an exposition of the transference approach).

In \cite{stein}, the multiplier theorem is actually a corollary of $L^p$-bounds for suitable Littlewood-Paley $g$-functions, which follows from a clever complex interpolation between the $L^2$ case, which holds by spectral theory, and an $L^p$-inequality, obtained by martingale tools. From a probabilist's viewpoint, this interpolation argument could be a mountain to climb:  Meyer literally wrote that \emph{on ne ``comprend'' pas ce qui se passe}  \cite[end of Section~1]{MR0501379}.

In this note we prove the multiplier theorem (Theorem~\ref{theo-1} below) relying only on martingale tools, namely Rota's construction and Burkh\"older-Gundy inequalities, the main contribution being therefore that we avoid the use of complex interpolation. With hindsight, this result could be considered as an analogue of the short proof of the maximal theorem for Markovian semigroups, sketched in \cite[below Theorem~IV.4.9]{stein}: like in that case, powerful analytical tools can prove results for rather general semigroups but, in the Markovian setting, probability is enough and gives much simpler proofs. It is remarkable that, apparently, this shortcut went unnoticed, maybe because both Stein and Meyer were focusing  mainly on Littlewood-Paley functions.

The proof allows also for an easy computation of the constants involved (in terms of $p$) and also for the norm of operators given by imaginary powers of the generator of the semigroup \cite[Corollary IV.6.4]{stein}. Assuming these bounds only, one can then deduce boundedness of  $g$-functions \cite[Theorem~1.1]{MR1264824}. Another application (Corollary \ref{coro-1}) comes from the fact that some form of Burkh\"older-Gundy inequalities still holds true for $p=1$ (Davis' Theorem): we remark that one might also obtain analog results in a general, non-Markovian, setting by extrapolation on the $L^p$ bounds (for a detailed account on Yano's extrapolation theory, see \cite{MR2122269}).

After writing this note, we discovered that a similar argument already appeared in the last section of \cite{shigekawa}: there, however, continuous-time stochastic calculus is widely used and it is not fully recognized that Rota's construction and Burkh\"older-Gundy inequalities suffice, without any assumption on the underlying measure space.

\section{Setting}

We briefly recall the setting and notation of \cite[Chapters~III and IV]{stein}. Let $(X, dx)$ be a $\sigma$-finite measure space and let $\bra{T^t}_{t\ge0}$ be a strongly continuous semigroup of operators defined on $L^2\bra{X,dx}$ such that the following conditions hold:
\begin{enumerate}
\item $\nor{T^tf}_p \le \nor{f}_p$ ($1\le p \le \infty$) (contraction);
\item $T^t$ is self-adjoint on $L^2\bra{X,\mu}$, for every $t\ge0$ (symmetry);
\item $T^tf \ge 0$ if $f\ge0$ (positivity);
\item $T^t 1 = 1$ (conservation of mass),
\end{enumerate}
where $T^t 1$ is defined by $\sup_n T^t I_{A_n}$, taking a sequence $A_n \uparrow X$, with $I_{A_n} \in L^2$ (this is well defined in general because of positivity). The infinitesimal generator $A$ of $\bra{T^t}_{t\ge0}$ in $L^2\bra{X,dx}$ is given by
\[ A = \lim_{t\downarrow 0} \frac{f - T^t f}{t}\comma \]
for any $f\in L^2\bra{X,\mu}$, whenever the limit exists in $L^2\bra{X,dx}$ (that defines its domain $D(A)$).

Because of symmetry and contraction assumptions on $T^t$, the generator $A$ is a self-adjoint, non-negative and densely defined operator. By spectral theory, there exists a unique resolution of the identity $\bra{E\bra{\lambda}}_{\lambda \in \rnum}$ associated to $A$. In particular, the representation of $T^t = e^{-tA}$, $t\ge 0$, holds in the following sense: 
\begin{equation} \label{eq-Tt} \ang{T^t f, g} = \int_0^\infty e^{-t\lambda} d \ang{E\bra{\lambda} f, g} \comma \end{equation}
where, for any $f$, $g \in L^2\bra{X,dx}$, $\lambda\mapsto \ang{E\bra{\lambda} f, g}$ is a bounded variation function on $\rnum$, with total variation not greater than $\nor{f}_2\nor{g}_2$.

On the other side, positivity and conservation assumptions allows for a dynamical realization of the semigroup as the transition semigroup associated to a Markov process, with state space $X$ and $dx$ as invariant measure: this is the content of \cite[Theorem~IV.4.9]{stein} (due to G.C.~Rota), that we describe here in a more explicit form (and actually a bit simplified, as we require only a finite product space). Note that, having no assumption on $X$, it is not clear whether $T^\veps$ is induced by some probability kernel; still, the proof proceeds as in the case of existence of Markov chains.

Given $\veps>0$ and $N \in \nnum$, let $\Omega = X^{N+1}$, endowed with the product $\sigma$-algebra. For $k \in \cur{0,\ldots,N}$, let $\pi_k$ be the projection on the $k$-th factor, let
\[ \cF_{k} = \sigma\bra{\pi_k, \pi_{k+1}, \ldots, \pi_N} \]
which defines an reverse (i.e.\ decreasing) filtration and let $\hat{\cF} = \sigma\bra{\pi_0}$. Then, there exists a $\sigma$-finite measure $\mathbb{P} = \mathbb{P}_{\veps, n}$ on $\Omega$ such that the law of $\pi_0$ w.r.t.\ $\mathbb{P}$ is $dx$ and for $k \in \cur{0,\ldots,N}$, $\mathbb{P}$ is $\sigma$-finite on $\cF_k$ and for every $f \in L^1\bra{X,dx}$, it holds
\[ T^{\veps } f \bra{ \pi_0 } =  \hat{\eE}\sqa{ \eE_k \sqa{f\bra{\pi_0}} }  = \hat{\eE}\sqa{ f_k} \comma\]
where $\hat{\eE}$ denotes the conditional expectation operator w.r.t.\ $\hat{\cF}$, $\eE_k$ is the same, w.r.t.\ $\cF_k$ and $f_k = \eE_k\sqa{f\circ \pi_0}$ is a reverse martingale: of course, being the set of times finite, there is no problem in applying the usual theory of martingales. Moreover, it is not difficult to check that all the properties and theorems used here and in what follows, which are well known to hold in probability spaces, extend verbatim to the $\sigma$-finite case, the only exception being Corollary \ref{coro-1} below.

We recall now the special case of spectral multipliers problem, addressed by Stein. Let $M$ be a bounded Borel function on $(0,\infty)$ and define, for $\lambda >0$,
\begin{equation}\label{eq-m}
 m\bra{\lambda} = - \lambda \int_0^\infty M\bra{t} e^{-t \lambda } dt\comma\end{equation}
(let also $m\bra{0} = 0$), which is a so-called multiplier of Laplace transform type, when we use it to define, by means of spectral calculus, the operator
\begin{equation}\label{eq-Tm} T_m f = \int_0^\infty m\bra{\lambda} dE\bra{\lambda}f \comma\end{equation}
for $f \in L^2\bra{X,dx}$. Since 
\begin{equation}\label{eq-supm} \nor{m}_{\infty} = \sup_\lambda \abs{m\bra{\lambda}} \le \sup_t \abs{M\bra{t}} = \nor{M}_\infty \comma\end{equation}
it follows by the spectral theorem that $T_m$ is well defined and maps continuously $L^2\bra{X,dx}$ into itself, with operator norm $\nor{T_m}_{2,2} \le  \nor{M}_\infty$. The problem consists in proving that, for $p \in ]1,\infty[$, $T_m$ maps continuously $L^2\cap L^p \bra{X,dx}$ into itself.

\section{Proof of the multiplier theorem}

We are in a position to state and prove Stein's result.

\begin{theorem}[Stein's multiplier theorem]\label{theo-1}
Let $p \in ]1,\infty[$. Then, $T_m$ is a bounded linear operator on $L^2\cap L^p\bra{X,dx}$, with \[ \nor{T_m}_{p,p} \le C_p \nor{M}_\infty \comma \]
where $C_p = O(\bra{p-1}^{-1})$ as $p \downarrow 1$.
\end{theorem}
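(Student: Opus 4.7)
The plan is to realize $T_m f$ as the limit (in $L^2$) of discrete martingale transforms of the reverse martingale $(f_k)$ provided by Rota's construction, and to bound the $L^p$-norm of these transforms uniformly via the Burkh\"older--Gundy square function inequality, which supplies the sharp $p$-dependence.

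First, for $f\in L^2\cap L^p$, starting from \eqref{eq-Tm} and \eqref{eq-m}, a Fubini exchange in the spectral integral gives the identity
\[ T_m f = \int_0^\infty M(t)\,\partial_t T^t f\,dt\period \]
I would discretize it: fix $\veps>0$, $N\in\nnum$, and set
\[ T_m^{(\veps,N)} f := \sum_{k=0}^{N-1} M(2k\veps)\bra{T^{2(k+1)\veps} - T^{2k\veps}} f\period \]
Applying Rota's identification from Section 2, namely $T^{2k\veps} f(\pi_0) = \hat{\eE}\sqa{f_k}$, term by term yields
\[ T_m^{(\veps,N)} f(\pi_0) = \hat{\eE}\sqa{S_N}\comma \qquad S_N := \sum_{k=0}^{N-1} M(2k\veps)\,(f_{k+1}-f_k)\comma \]
so $S_N$ is the martingale transform of the reverse martingale $(f_k)$ by the deterministic, bounded sequence $(M(2k\veps))_k$.

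Second, since $|M(2k\veps)|\le\nor{M}_\infty$, the Burkh\"older--Gundy square function inequality applied to $(f_k)$ gives
\[ \nor{S_N}_p \le C_p\,\nor{M}_\infty\,\nor{f_0 - f_N}_p \le 2 C_p\,\nor{M}_\infty\,\nor{f}_p\comma \]
with Burkh\"older's constant $C_p = O((p-1)^{-1})$ as $p\downarrow 1$ (bounded martingale transforms only scale the square function by $\nor{M}_\infty$). Contractivity of $\hat{\eE}$ on $L^p$ then yields the uniform bound $\nor{T_m^{(\veps,N)} f}_p \le 2 C_p\,\nor{M}_\infty\,\nor{f}_p$.

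Finally, I would let $\veps\downarrow 0$ and $N\veps\to\infty$. For $f\in L^2$, spectral calculus identifies $T_m^{(\veps,N)}$ with the multiplier
\[ m_{\veps,N}(\lambda) = \sum_{k=0}^{N-1} M(2k\veps)\bra{e^{-2(k+1)\veps\lambda} - e^{-2k\veps\lambda}}\comma \]
uniformly bounded by $\nor{M}_\infty$ and converging pointwise to $m(\lambda)$ (at least when $M$ is continuous), so dominated convergence in the spectral resolution gives $T_m^{(\veps,N)} f \to T_m f$ in $L^2$; extracting an a.e.\ convergent subsequence and invoking Fatou's lemma then promotes the uniform bound above to $\nor{T_m f}_p \le 2C_p \nor{M}_\infty \nor{f}_p$. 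The main technical obstacle is precisely this last step for a merely bounded Borel $M$, where pointwise convergence of the discrete multipliers is not automatic; I would resolve it by first treating continuous, compactly supported $M$ and then extending to general $M$ by approximation (e.g.\ by a Luzin-type sequence with $\nor{M_n}_\infty\le\nor{M}_\infty$ and $M_n\to M$ a.e.), using that the constant $C_p\nor{M}_\infty$ depends only on $\nor{M}_\infty$ and that $T_{m_n}f\to T_m f$ in $L^2$ again by dominated convergence in the spectral calculus.
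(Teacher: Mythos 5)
Your proposal is correct and follows essentially the same route as the paper: discretize $T_m$ into a finite sum of semigroup increments, identify it via Rota's construction as $\hat{\eE}$ of a martingale transform of the reverse martingale $(f_k)$, apply Burkh\"older--Gundy to get the uniform $L^p$ bound, and pass to the limit through $L^2$ spectral convergence plus Fatou. The only (cosmetic) difference is that you sample $M$ at points $2k\veps$, which forces the extra continuity/Luzin step you correctly flag, whereas the paper sidesteps this by approximating $M$ by step functions from the outset, for which the discrete identity is exact (its Lemma~\ref{lemma-1}) and the $L^2$ limit needs only a.e.\ convergence of the $M^n$ (its Lemma~\ref{lemma-2}).
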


We sketch heuristically the line of reasoning. By substituting \eqref{eq-m}, which gives $m$ in terms of $M$, into \eqref{eq-Tm} and exchanging integrals, we obtain the expression
\begin{equation}\label{eq:T_m}
T_m f = \int_0^\infty M\bra{t} \sqa{ \int_0^\infty -\lambda e^{-t\lambda} dE\bra{\lambda}f } dt = \int_0^\infty M\bra{t} \frac{d}{dt} T^tf dt \comma\end{equation}
where we also used \eqref{eq-Tt}. Then, we formally simplify the increments $dt$ and recall that, by Rota's construction, it holds $T^t f = \hat{\eE} \sqa{  f_t }$, where $f_t$ is some reverse martingale:
\[ T_m f = \int_0^\infty M\bra{t} d T^t f = \hat{\eE} \sqa{ \int_0^\infty M\bra{t} d f_t }\period\]
To estimate the $L^p$ norm, we use the fact that $ \hat{\eE}$ is a contraction and Burkh\"older-Gundy inequalities, obtaining
\[ \nor{T_mf}_p \le C_p \nor{M}_\infty \nor{f}_p\period \]

To make this reasoning rigorous, we first consider the case when $M$ is a step function and then we pass to the limit. To do this, we state and prove two elementary lemmas, the first being in fact a special case of \eqref{eq:T_m} for step functions.

\begin{lemma}\label{lemma-1}
Given $N \in \nnum$, let $0 = t_0 \le t_1 \le \ldots \le t_N < \infty$ and let 
\begin{equation} \label{M-step} M = \sum_{i=0}^{N-1} M_i I_{[t_{i}, t_{i+1}[} \period \end{equation}
Then, for every $f \in L^2\bra{X,dx}$ it holds
\begin{equation}\label{Tm-step}  T_m f = \sum_{i=0}^N M_i \bra{T^{t_{i+1}} f - T^{t_i} f}  \period\end{equation}
\end{lemma}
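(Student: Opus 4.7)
The plan is a direct computation: substitute the step function \eqref{M-step} into the definition \eqref{eq-m} of $m$, evaluate the resulting elementary integral, and then feed the result into \eqref{eq-Tm}, using \eqref{eq-Tt} to recognise the pieces as $T^{t_i}f$.

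Concretely, I would first compute, for $\lambda>0$,
\[
m(\lambda) = -\lambda \int_0^\infty M(t) e^{-t\lambda} dt
= -\lambda \sum_{i=0}^{N-1} M_i \int_{t_i}^{t_{i+1}} e^{-t\lambda} dt
= \sum_{i=0}^{N-1} M_i \bra{ e^{-t_{i+1}\lambda} - e^{-t_i \lambda}}\comma
\]
where the interchange of the (finite) sum and the integral is trivial, and the extra factor of $-\lambda$ cancels the $-1/\lambda$ coming from the antiderivative of $e^{-t\lambda}$. Note that the resulting expression also vanishes at $\lambda=0$, consistently with the convention $m(0)=0$.

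Next, I would plug this into \eqref{eq-Tm}. Since the sum over $i$ is finite, we can pull it out of the spectral integral:
\[
T_m f = \int_0^\infty m(\lambda) dE(\lambda) f
= \sum_{i=0}^{N-1} M_i \int_0^\infty \bra{e^{-t_{i+1}\lambda} - e^{-t_i \lambda}} dE(\lambda) f\period
\]
By the spectral representation \eqref{eq-Tt} of the semigroup, $\int_0^\infty e^{-t\lambda} dE(\lambda) f = T^t f$ in $L^2\bra{X,dx}$, so each summand equals $M_i \bra{ T^{t_{i+1}} f - T^{t_i} f}$, yielding \eqref{Tm-step}.

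There is no real obstacle: the only thing to be careful about is that the integral in \eqref{eq-m} and the spectral integral in \eqref{eq-Tm} both interact well with the finite linear combination, which is immediate by linearity. The statement is essentially a rigorous version of the formal manipulation \eqref{eq:T_m} that has just been sketched, restricted to step functions where no limiting argument is required.
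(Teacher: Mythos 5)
Your proof is correct and follows essentially the same route as the paper: the paper simply uses linearity of $M \mapsto m \mapsto T_m$ to reduce to the single case $M = I_{[0,t[}$, where the same elementary integration gives $m(\lambda) = e^{-t\lambda}-1$ and hence $T_m = T^t - \mathrm{Id}$, whereas you carry out the identical computation for the full step function at once. Both are fine.
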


\begin{proof}
Since $M \mapsto m \mapsto T_m$ is linear, it is enough to consider the case $M = I_{[0,t[}$ and prove that $T_m = T^t- Id$. Integrating by parts, we have $m\bra{\lambda} = e^{-t\lambda} - 1$ and so we conclude by \eqref{eq-Tt}.\qed
\end{proof}

\begin{lemma}\label{lemma-2}
Let  $\bra{M^n}_{n\ge0}$ be a sequence of Borel functions, with $\nor{M_n}_\infty$ uniformly bounded and converging $\mathscr{L}^1$-a.e.\ to some function $M$. Then, for every $f \in L^2\bra{X,dx}$, it holds 
\[ \lim_{n\to \infty} T_{n}f  = T_m f \quad \textrm{ in $L^2\bra{X,dx}$,} \]
where $T_{n}$ denotes the operator defined by $M^n$ in place of $M$.
\end{lemma}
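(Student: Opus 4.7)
The plan is to reduce the statement to two successive applications of the dominated convergence theorem: one in the definition \eqref{eq-m} to obtain pointwise convergence of the multipliers $m^n$ to $m$, and one in the spectral representation \eqref{eq-Tm} to pass from pointwise convergence to $L^2$-convergence of the operators.

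First, fix $\lambda > 0$. By hypothesis $M^n \to M$ almost everywhere and there is a constant $C$ with $\nor{M^n}_\infty \le C$ for all $n$. Since $\lambda e^{-t\lambda}$ is integrable on $(0,\infty)$ with integral $1$, the function $t \mapsto C \lambda e^{-t\lambda}$ dominates $t \mapsto \lambda M^n(t) e^{-t\lambda}$ uniformly in $n$, and dominated convergence applied in the defining formula \eqref{eq-m} yields $m^n(\lambda) \to m(\lambda)$ for every $\lambda > 0$. Moreover, by \eqref{eq-supm}, $\nor{m^n}_\infty \le \nor{M^n}_\infty \le C$, so the sequence $(m^n)$ is uniformly bounded.

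Next, by the spectral theorem (applied to the self-adjoint operator $A$), for any $f \in L^2\bra{X,dx}$ the map $\lambda \mapsto \ang{E\bra{\lambda}f,f}$ is a non-decreasing function with total variation $\nor{f}_2^2$, and
\[ \nor{T_{n}f - T_m f}_2^2 = \int_0^\infty \abs{m^n\bra{\lambda} - m\bra{\lambda}}^2 d\ang{E\bra{\lambda}f,f}\period \]
The integrand is dominated by the constant $(2C)^2$, which is integrable against the finite measure $d\ang{E\bra{\lambda}f,f}$, and converges pointwise to $0$ by the previous step. A second application of dominated convergence gives $\nor{T_{n}f - T_m f}_2 \to 0$, as claimed.

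There is no real obstacle: the only point that deserves a moment of care is that $M$, being a pointwise limit of uniformly bounded Borel functions, is itself a bounded Borel function, so that $m$ and $T_m$ are well defined by \eqref{eq-m}--\eqref{eq-Tm} and the conclusion is meaningful.
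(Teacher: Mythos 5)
Your proof is correct and follows essentially the same route as the paper's: dominated convergence in \eqref{eq-m} to get pointwise convergence of the multipliers, then dominated convergence against the finite spectral measure $d\ang{E\bra{\lambda}f,f}$ to conclude in $L^2$. The only cosmetic difference is that the paper first reduces to $M=0$ by linearity, whereas you work directly with $m^n-m$; the substance is identical.
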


\begin{proof}
As above, by linearity, it is enough to consider the case $M =0$ (i.e.\ $m=0$ and $T_m = 0$). By dominated convergence, from \eqref{eq-m} we obtain that, for every $\lambda\in [0,\infty[$, $\abs{m_n\bra{\lambda}}$ converges to zero. From \eqref{eq-supm} and the assumption on $\bra{M^n}_{n\ge0}$ this convergence is dominated by some constant, and this suffices to pass to the limit. Indeed, given $f \in L^2\bra{X,dx}$, by spectral theorem, it holds
\[ \nor{T_{n}f }_2^2  = \ang{ T_{n}f, T_{n} f }  = \int_0^\infty \abs{m_n\bra{\lambda}}^2 d \ang{E\bra{\lambda}f, f} \period\]
As already remarked, $d\ang{E\bra{\lambda}f, f}$ is a finite positive measure and so we conclude by dominated convergence.\qed
\end{proof}

\emph{Proof of Theorem \ref{theo-1}.}
We may assume that $\abs{M} \le 1$. First, let $M$ be a step function of the form \eqref{M-step}, where $\veps = t_{i+1}-t_i$ constant for $i \in \cur{0,\ldots, N-1}$. If we apply Rota's theorem, as described in the previous section, we obtain from Lemma \ref{lemma-1} above that 
\[  T_m f\circ \pi_0  =  \sum_{i=0}^{N-1} M_i \bra{\hat{\eE}\sqa{ f_{i+1} } - \hat{\eE}\sqa{f_i}} =  \hat{\eE}\sqa{ \sum_{i=0}^{N-1} M_i \bra{  f_{i+1} - f_i } }, \quad \text{$\mathbb{P}$-a.s.~in $\Omega$.}\]
It holds therefore
\[ \nor{T_mf}_{p} = \nor{\hat{\eE}\sqa{ \sum_{i=0}^{N-1} M_i  \bra{ f_{i+1} - f_i } } }_{p}\comma \]
where the first norm is computed in $L^p\bra{X,dx}$ and the other in $L^p\bra{\Omega, \mathbb{P}}$, because the law of $\pi_0$ is $dx$. Since conditional expectations are contractions, we have
\[ \nor{T_m f}_p \le \nor{ \sum_{i=0}^N M_i  \bra{f_{i+1} - f_i} }_{p} \period\]
We apply Burkh\"older-Gundy inequalities for martingale transforms (e.g.\ \cite[Theorem~IV.4.2]{stein}) to the reverse martingale above:
\[ \nor{ \sum_{i=0}^N M_i  \bra{f_{i+1} - f_i} }_p \le c_p \nor{f}_p \comma \]
where $c\bra{p}$ is a constant, depending only on $p$: the claimed bound for $p \downarrow 1$ follows from constants-chasing in Marcinkiewicz interpolation.
In the general case, we approximate a $M$ with a sequence of step functions $\bra{M^n}_{n\ge1}$ such that $\abs{M^n} \le 1$ for every $n$ and $M^n \bra{s} \to M\bra{s}$, $\mathscr{L}^1$-a.e.\ $s \in [0,\infty[$: this possibility is well-known, as it follows e.g.\ by density in $L^1\bra{[0,\infty[, \mathscr{L}^1}$ of step functions and a diagonal argument. For a fixed $f\in L^2\cap L^p\bra{X,dx}$, Lemma \ref{lemma-1} entails that, up to a subsequence, $(T_{n}f)$ converge $dx$-a.e.\ to $T_m f$. By Fatou's lemma, it holds
\[ \nor{T_m f}_p \le \liminf_{n\to \infty} \nor{T_{n} f }_p \le c_p \nor{f}_p,\]
that gives the thesis. \qed

\begin{corollary}\label{coro-1}
If $\bra{X,dx}$ has finite measure $\abs{X}$, it holds for every $f \in L^2\bra{X,dx}$,
\[ \nor{T_m f}_1 \le c\abs{X} \nor{M}_\infty \nor{ f}_{L \log L} \comma\]
where $c>0$ is some universal constant.
\end{corollary}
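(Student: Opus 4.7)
\emph{Proof plan for Corollary~\ref{coro-1}.} The strategy follows the template of Theorem~\ref{theo-1}. By Lemma~\ref{lemma-2} combined with Fatou's lemma, it suffices to prove the estimate when $M$ is a step function of the form \eqref{M-step} with uniform spacing $\veps$. For such $M$, Rota's construction and Lemma~\ref{lemma-1} give the pointwise identity
\[ T_m f \circ \pi_0 = \hat{\eE}\sqa{\sum_{i=0}^{N-1} M_i (f_{i+1} - f_i)} \qquad \mathbb{P}\text{-a.s.}, \]
so that, using the $L^1$-contractivity of $\hat\eE$ and the fact that the law of $\pi_0$ under $\mathbb{P}$ is $dx$, the claim reduces to bounding $\nor{g}_{L^1(\mathbb{P})}$ with $g = \sum_i M_i(f_{i+1} - f_i)$ the martingale transform of the reverse martingale $(f_i)$.

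At the endpoint $p=1$, the replacement for Burkh\"older--Gundy is Davis' theorem, which, on a probability space, gives $\nor{g}_{L^1} \le c\nor{M}_\infty \nor{f^*}_{L^1}$ with $f^* = \max_i |f_i|$ (for instance by controlling $g$ by its square function and then applying Davis); combined with Doob's $L\log L$ maximal inequality $\nor{f^*}_{L^1} \le c \nor{f\circ\pi_0}_{L\log L}$, this yields precisely the right kind of estimate. Both inputs are genuinely probabilistic, so I would exploit the hypothesis $|X|<\infty$ to pass from $\mathbb{P}$ (of total mass $|X|$) to the probability measure $\tilde{\mathbb{P}} = \mathbb{P}/|X|$, apply the two inequalities there, and then undo the normalization. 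The factor $|X|$ in the corollary emerges from this rescaling $\mathbb{P}\leftrightarrow\tilde{\mathbb{P}}$ together with the scaling of the Luxemburg $L\log L$ norm under dilation of the underlying measure; the constants only depend on the universal constants in Davis' and Doob's inequalities.

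The main obstacle---and precisely the reason the hypothesis $|X|<\infty$ cannot be dropped---is that Davis' theorem and Doob's $L\log L$ maximal inequality do not survive in a genuinely $\sigma$-finite setting, unlike the Burkh\"older--Gundy inequalities used for $p>1$, which are scale-invariant in the measure and apply directly to $\mathbb{P}$ as in Theorem~\ref{theo-1}. Beyond this, the only minor care required is to verify that the endpoint inequalities apply to the reverse martingale $(f_i)$, which is immediate by time reversal (converting $(f_i)$ into a standard forward martingale with finite index set), and to keep constants under control through the normalization and the final step-function approximation.
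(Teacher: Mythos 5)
Your proposal is correct and follows essentially the same route as the paper: reduce to step functions via Lemma~\ref{lemma-2} and Fatou, use Rota's construction and the contractivity of $\hat{\eE}$, replace Burkh\"older--Gundy by Davis' theorem (passing through the square function) and finish with the $L\log L$ corollary of Doob's maximal inequality. The only cosmetic difference is that the paper normalizes by assuming $\abs{X}=1$ at the outset rather than rescaling $\mathbb{P}$ to a probability measure, which amounts to the same thing.
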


\begin{proof}
We may assume that $\abs{X} =1$ and $\nor{M}_\infty \le 1$. Arguing as above, we apply Davis' Theorem instead of Burkh\"older-Gundy inequalities, e.g.\ \cite[Theorem~2.1]{MR580107}:
\[\eE \sqa{ \abs{ \sum_{i=0}^N M_i  \bra{f_{i+1} - f_i} } } \le c \eE\sqa{ \bra{\sum_{i=0}^{N-1} \abs{f_{i+1} - f_i }^2} ^{1/2} } \le c  \eE \sqa{  \sup_{i=0,\ldots N}\abs{ f_i }  } \period\]
Then, we use the well-known corollary of Doob's inequality, that allows to control the $L^1$ norm of a maximal function of martingale (closed by $f$) in terms of the $L\log L$ norm of $f$.\qed
\end{proof}

\noindent {\bf Acknowledgements.} The author is member of the GNAMPA group of the Istituto Nazionale di Alta Matematica (INdAM). He also thanks G.M.\ Dall'Ara for many discussions on the subject.

\providecommand{\bysame}{\leavevmode\hbox to3em{\hrulefill}\thinspace}
\providecommand{\MR}{\relax\ifhmode\unskip\space\fi MR }
\providecommand{\MRhref}[2]{%
  \href{http://www.ams.org/mathscinet-getitem?mr=#1}{#2}
}
\providecommand{\href}[2]{#2}

\end{document}